\newtheorem{thm}{Theorem}
\newtheorem{cor}{Corollary}
\newtheorem{lem}{Lemma}
\newtheorem{conj}{Conjecture}
\newtheorem{prob}{Problem}
\theoremstyle{definition}
\newtheorem{defn}{Definition}
\newtheorem{example}{Example}
\newtheorem{case}{Case}
\newtheorem{subcase}{Subcase}
\newenvironment{rem}{%
\bigskip
\noindent \textsl{{\sl Remark. }}}{\bigskip}
\newenvironment{rems}{%
\bigskip
\noindent \textsl{{\sl Remarks. }}}{\bigskip}
\newenvironment{pf}[1][]{%
 \vskip 1mm
 \noindent
 \ifthenelse{\equal{#1}{}}%
  {{\slshape Proof. }}%
  {{\slshape #1.} }%
 }%
{\qed\bigskip}
\newcounter{alphabet}
\newenvironment{Thm}[1][]{\refstepcounter{alphabet}%
\bigskip%
\noindent%
{\bf Theorem \Alph{alphabet}}%
\ifthenelse{\equal{#1}{}}{}{ (#1)}%
{\bf .} \itshape}{\vskip 8pt}
\newcommand{\IN}{{\mathbb N}}
\newcommand{\IC}{{\mathbb C}}
\newcommand{\ID}{{\mathbb D}}
\def\be{\begin{equation}}
\def\ee{\end{equation}}
\newcommand{\bee}{\begin{enumerate}}
\newcommand{\eee}{\end{enumerate}}
\newcommand{\blem}{\begin{lem}}
\newcommand{\elem}{\end{lem}}
\newcommand{\bthm}{\begin{thm}}
\newcommand{\ethm}{\end{thm}}
\newcommand{\bcor}{\begin{cor}}
\newcommand{\ecor}{\end{cor}}
\newcommand{\beg}{\begin{example}}
\newcommand{\eeg}{\end{example}}
\newcommand{\begs}{\begin{examples}}
\newcommand{\eegs}{\end{examples}}
\newcommand{\bdefe}{\begin{defn}}
\newcommand{\edefe}{\end{defn}}
\newcommand{\bprob}{\begin{prob}}
\newcommand{\eprob}{\end{prob}}
\newcommand{\bques}{\begin{ques}}
\newcommand{\eques}{\end{ques}}
\newcommand{\bei}{\begin{itemize}}
\newcommand{\eei}{\end{itemize}}
\newcommand{\bcon}{\begin{conj}}
\newcommand{\econ}{\end{conj}}
\newcommand{\bcons}{\begin{conjs}}
\newcommand{\econs}{\end{conjs}}
\newcommand{\bprop}{\begin{propo}}
\newcommand{\eprop}{\end{propo}}
\newcommand{\br}{\begin{rem}}
\newcommand{\er}{\end{rem}}
\newcommand{\brs}{\begin{rems}}
\newcommand{\ers}{\end{rems}}
\newcommand{\bo}{\begin{obser}}
\newcommand{\eo}{\end{obser}}
\newcommand{\bos}{\begin{obsers}}
\newcommand{\eos}{\end{obsers}}
\newcommand{\bca}{\begin{case}}
\newcommand{\eca}{\end{case}}
\newcommand{\bsca}{\begin{subcase}}
\newcommand{\esca}{\end{subcase}}
\newcommand{\bpf}{\begin{pf}}
\newcommand{\epf}{\end{pf}}
\newcommand{\ba}{\begin{array}}
\newcommand{\ea}{\end{array}}
\newcommand{\beq}{\begin{eqnarray}}
\newcommand{\beqq}{\begin{eqnarray*}}
\newcommand{\eeq}{\end{eqnarray}}
\newcommand{\eeqq}{\end{eqnarray*}}
\newcommand{\ra}{\rightarrow}
\newcommand{\ds}{\displaystyle}
\def\cc{\setcounter{equation}{0}   
\setcounter{figure}{0}\setcounter{table}{0}}
\newcounter{minutes}\setcounter{minutes}{\time}
\newcounter{hours}\setcounter{hours}{\time}
\begin{document}

\bibliographystyle{amsplain}
\title[Bohr inequality and  Ces\'aro operators]{The Bohr inequality for the generalized C{\'e}saro averaging operators}

\thanks{
File:~\jobname .tex,
          printed: \number\day-\number\month-\number\year,
          \thehours.\ifnum\theminutes<10{0}\fi\theminutes}


\author{Ilgiz R Kayumov, Diana M. Khammatova and Saminathan Ponnusamy}

\address{I. R Kayumov, Kazan Federal University, 420 008 Kazan, Russia}
\email{ikayumov@gmail.com}

\address{D. M. Khammatova, Kazan Federal University, 420 008 Kazan, Russia}
\email{dianalynx@rambler.ru}

\address{S. Ponnusamy,
Department of Mathematics,
Indian Institute of Technology Madras,
Chennai-600 036, India}
\email{samy@iitm.ac.in}

\subjclass[2000]{Primary: 30A10, 30B10; 30C62, 30H05, 31A05, 41A58; Secondary:  30C75, 40A30}
\keywords{Bohr inequality, Bohr radius, convolution, Gaussian Hypergeometric functions, generalized C{\'e}saro operators.\\
The article is to appear in Mediterranean Journal of Mathematics.
}

\begin{abstract}

The main aim of this paper is to prove a generalization of the classical Bohr theorem and as an application, we obtain a
counterpart of Bohr theorem for the generalized Ces\'aro operator.
\end{abstract}


\maketitle
\pagestyle{myheadings}
\markboth{I. R. Kayumov, D. M. Khammatova and S. Ponnusamy}{Bohr inequality and  Ces\'aro operators}
\cc

\section{Introduction and Preliminaries}

This work is connected with one of classical results known as Bohr's theorem for the class  $\mathcal{B}$ of analytic self mappings of the unit disk
$\mathbb{D}:=\{z\in \mathbb{C}:\,|z|<1 \}$.  Harold Bohr's initial result of 1914
has sharpened by several prominent mathematicians. Since then it has been a source of investigations in numerous other function spaces.
The Bohr theorem in its final form says the following.

\begin{Thm} {\rm \cite[H. Bohr, 1914]{Bohr-14} }\label{KKP16-thA}
If $f\in {\mathcal B}$ and $f(z)=\sum\limits_{n=0}^{\infty} a_n z^n$, then $\sum\limits_{n=0}^{\infty} |a_n|\, r^n\leq 1$
for $r\leq 1/3$ and the constant $1/3$ cannot be improved.	
\end{Thm}

The constant $1/3$ in this theorem is called the Bohr radius. Few other proofs of this result were also given (see \cite{Tomic-62-16}).
It is also true \cite{PaulPopeSingh-02-10} that $\sum_{n=0}^{\infty} |a_n|\, (1/3)^n=1$ if and only if $f$ is a constant function.
However, there are a lot of generalizations and extensions of this theorem (cf. \cite{Bomb-1962,BombBour-2004,DjaRaman-2000,Tomic-62-16}).
The interest on this topic was revived due to the discovery of  extensions to domains in $\IC^n$ and to more general abstract setting
in various contexts, due mainly to works of Aizenberg, Boas, Khavinson, and others (cf. \cite{BDK5,BoasKhavin-97-4,PauSin--09,A2005,A2000,AAD}).
In \cite{A2000,AAD}, multidimensional analogues of Bohr's inequality in which the unit disk $\ID$ is replaced by a domain in $\IC^n$ were considered.
One can also find some information about it, for example, in the survey by Abu-Muhanna et al. \cite{AAPon1}, \cite[Chapter 8]{GarMasRoss-2018} and
the monograph \cite{KM-2007}.

Another widely discussed problem is the investigation of the asymptotical behaviour of the Bohr sum. In this connection, a natural
question is to ask for the best constant $C(r)\geq 1$ such that for $f\in {\mathcal B}$ we have
$$\sum\limits_{n=0}^{\infty} |a_n|\, r^n\leq C(r).
$$
Indeed, Bombieri \cite{Bomb-1962} proved that
$$\sum\limits_{n=0}^{\infty} |a_n|\, r^n \leq \frac{3-\sqrt{8(1-r^2)}}{r} ~\mbox{ for }~ 1/3\leq r\leq 1/\sqrt{2}.
$$
Later in \cite{BombBour-2004}, Bombieri and Bourgain proved that
$$
\sum\limits_{n=0}^{\infty} |a_n|\, r^n< \frac{1}{\sqrt{1-r^{2}}} ~\mbox{ for $r> 1/\sqrt{2}$}
$$
so that $C(r) \asymp (1-r^{2})^{-1/2}$ as $r\to 1$. In the same paper they also obtained a lower bound.
Namely, they proved that for $\varepsilon >0$ there
exists a constant $c=c(\varepsilon)$ such that
$$
\sum\limits_{n=0}^{\infty} |a_n|\, r^n \geq (1-r^2)^{-1/2} - \left( c \log \frac{1}{1-r} \right)^{3/2+\varepsilon} ~\mbox{ as $r\rightarrow 1$}.
$$

Some recent results on the topic can be found in \cite{AliBarSoly, IsmKayPon, KayPon1,KayPon3,LLP2020,LSX2018,PVW2019,PVW201911}.

The article is organized as follows. First, we consider a natural generalization of Theorem~A 
and make it
applicable to many situations (see Theorem \ref{KKP16-th1}). Secondly, in Section \ref{KKP2-sec3} as an application, we investigate a convolution counterpart
of Bohr radius and also the operator counterpart of the so-called one parameter family of averaging Ces\'aro operator $\mathcal{C}_f^{1}$,  discussed for example in \cite{stem94,AHLNP2005}. Finally, in Theorem \ref{KKP2-th5}, we discuss asymptotic Bohr radius for $\mathcal{C}_f^{1}$

\section{Bohr radius in general form}

Let $\{\varphi_k(r)\}_{k=0}^{\infty}$ be a sequence of nonnegative continuous functions in $[0,1)$ such that the series
$\sum_{k=0}^\infty \varphi_k(r)$ converges locally uniformly with respect to $r \in [0,1)$.

\bthm \label{KKP16-th1}
	Let $f\in \mathcal{B}$, $f(z)=\sum_{k=0}^\infty a_k z^k$ and $p\in (0,2]$. If
	\begin{equation}\label{KKP16-eq1}
	\varphi_0(r) > \frac{2}{p} \sum_{k=1}^\infty \varphi_k(r)\qquad \text{for $r\in [0, R),$}
	\end{equation}
where $R$ is the minimal positive root of the equation
$$\varphi_0(x) = \frac{2}{p} \sum_{k=1}^\infty \varphi_k(x),
$$
then  the following sharp inequality holds:
\begin{equation}\label{KKP16-eq2}
	B_{f}(\varphi, p, r):=|a_0|^p \varphi_0(r) + \sum_{k=1}^{\infty} |a_k| \varphi_k(r) \leq \varphi_0(r) ~\mbox{ for all $r \leq R$}.
\end{equation}
In the case when
$  \varphi_0(x) < (2/p)\sum_{k=1}^\infty \varphi_k(x)
$
in some interval $(R,R+\varepsilon)$, the number $R$ cannot be improved. If the functions $\varphi_k(x)$ ($k\geq 0$) are smooth functions then the last condition
is equivalent to the inequality
$$
\varphi_0'(R) <\frac{2}{p} \sum_{k=1}^\infty \varphi_k'(R)
$$
\ethm
\bpf
For $f \in\mathcal{B}$, an application of Schwarz-Pick lemma gives the inequality $|a_k|\leq 1-|a|^2$ for all $k\geq 1$ and thus,
we get that
\beqq
B_{f}(\varphi, p, r) &\leq & |a_0|^p \varphi_0(r) +(1-|a_0|^2)\sum_{k=1}^\infty \varphi_k(r) \\
&= & \varphi_0(r)+(1-|a_0|^2)\left [\sum_{k=1}^\infty \varphi_k(r) - \left (\frac{1-|a_0|^p}{1-|a_0|^2}\right ) \varphi_0(r)\right]\\
&\leq & \varphi_0(r)+(1-|a_0|^2)\left [\sum_{k=1}^\infty \varphi_k(r) -\frac{p}{2}\varphi_0(r)\right]\\
&\leq & \varphi_0(r), ~\mbox{ by Eqn. \eqref{KKP16-eq1},}
\eeqq
for all $r \leq R$,  by the definition of $R$. This proves the desired inequality \eqref{KKP16-eq2}. In the third inequality above,
we have used the following fact:
$$ A(x)=\frac{1-x^p}{1-x^2}\geq \frac{p}{2} ~\mbox{ for all $x\in [0,1)$}
$$
and there is nothing to prove for $p=2$. This inequality is easy to verify. Indeed,
$$ A'(x)=-\frac{xM(x)}{(1-x^2)^2}, \quad M(x)=(2-p)x^p+px^{p-2}-2,
$$
and, since $M'(x)=-p(2-p)x^{p-3}(1-x^{2})< 0$ for $x\in (0,1)$ and for each $p\in (0,2)$, it follows that $M(x)> M(1) =0$
and thus, $A(x)$ is decreasing on $[0,1)$. Hence $A(x)\geq \lim _{x\ra 1^{-}}A(x)=p/2$, as desired.

Now let us prove that $R$ is an optimal number. We consider the function
$$ f(z)=\frac{z-a}{1-az}
$$
with $a \in [0,1)$. For this function we have
\beqq
|a_0|^p \varphi_0(r) + \sum_{k=1}^{\infty} |a_k| \varphi_k(r)
&=&a^p\varphi_0(r)+(1-a^2)\sum_{k=1}^\infty a^{k-1} \varphi_k(r)\\
&=&\varphi_0(r)+(1-a)\left [2 \sum_{k=1}^\infty a^{k-1} \varphi_k(r) - p\varphi_0(r)\right ] \\
&& \quad -(1-a)\left [(1-a) \sum_{k=1}^\infty a^{k-1} \varphi_k(r) + \left ( \frac{1-a^p}{1-a}-p\right )\varphi_0(r)\right ] \\
&=&\varphi_0(r)+(1-a)\left [2 \sum_{k=1}^\infty a^{k-1} \varphi_k(r) - p\varphi_0(r)\right ] +O((1-a)^2)
\eeqq
as $a\ra 1^{-}$. Now it is easy to see that the last number is $>$ $\varphi_0(r)$ when $a$ is close to $1$.
The proof of the theorem is complete.
\epf

\br
Clearly for $p>2$, we see that $1\leq A(x)<p/2$ for $x\in [0,1)$, and thus, in  this case, Theorem \ref{KKP16-th1} holds by replacing the
factor $2/p$ in Eqn. \eqref{KKP16-eq1} by $1$ and also at the other three places in the statement. The most important cases
are at $p=1,2$.
\er
%

\beg\label{KKP16-eg1}
Suppose that $f\in \mathcal{B}$, $f(z)=\sum_{k=0}^\infty a_k z^k$ and $p\in (0,2]$. Then Theorem \ref{KKP16-th1} gives the following:
\bee
\item For $\varphi_k(r)=r^k$ $(k\geq 0)$, we easily have
(see \cite[Proposition 1.4]{Blasco2010} and \cite[Remark 1]{PVW2019})
$$|a_0|^p+\sum_{k=1}^{\infty}|a_k|r^k\leq 1 ~\mbox{for $\ds r\leq R_1(p)=\frac{p}{2+p}$}
$$
and the constant $R_1(p)$ cannot be improved. The case $p=1$ is the classical Bohr inequality. The case $p=2$ is due to \cite{PaulPopeSingh-02-10}
and the inequality in this case does play a special role. We remark that for $p>2$, $R_1(p)$ should be taken as $1/2$.

\item For $\varphi_k(r)=(k+1)r^k$ $(k\geq 0)$, we easily have the sharp inequality
$$|a_0|^p+\sum_{k=1}^{\infty}(k+1)|a_k|r^k\leq 1 ~\mbox{for $\ds r\leq R_2(p)=1-\sqrt{\frac{2}{2+p}}$.}
$$

\item For $\varphi_0(r)=1$ and $\varphi_k(r)=k^{\alpha}r^k$ $(k\geq 1)$, the condition \eqref{KKP16-eq1}  reduces to
$p\geq 2\sum_{k=1}^{\infty}k^{\alpha} r^k$. In particular, as
$$\sum_{k=1}^{\infty}k r^k=\frac{r}{(1-r)^2} ~\mbox{ and }~\sum_{k=1}^{\infty}k^2 r^k=\frac{r(1+r)}{(1-r)^3},
$$
it can be easily seen that the following sharp inequalities (with $\alpha =1, 2$) hold:
$$|a_0|^p+\sum_{k=1}^{\infty}k|a_k|r^k\leq 1 ~\mbox{for $\ds r\leq R_3(p)=\frac{p+1-\sqrt{2p+1}}{p}$}
$$
and
$$|a_0|^p+\sum_{k=1}^{\infty}k^2|a_k|r^k\leq 1 ~\mbox{for $\ds r\leq R_4(p),$}
$$
where $R_4(p)$ is the  minimal positive root of the equation $p(1-r)^3-2r(1+r)=0$.
\eee

\eeg
\section{Convolution counterpart of Bohr radius}\label{KKP2-sec3}

For two analytic functions $f(z)=\sum_{k=0}^\infty a_k z^k$ and $g(z)=\sum_{k=0}^\infty b_k z^k$ in $\ID$, we define the Hadamard product (or convolution)
$f*g$ of $f$ and $g$ by the power series
$$(f*g)(z) =\sum_{k=0}^\infty a_k b_k z^k, \quad z\in \ID.
$$
Clearly, $f*g=g*f$.

As an application of Theorem \ref{KKP16-th1} we consider first the convolution operator of the form
$$(F*f)(z)=\sum_{k=0}^\infty \gamma_k a_k z^k,
$$
where $F(z):={}_2F_1(a,b;c;z)=F(a,b;c;z)$ denotes the Gaussian hypergeometric function defined by the
power series expansion
$$
F(z)= \sum_{k=0}^{\infty} \gamma_k z^k, \quad \gamma_k  = \frac{(a)_k (b)_k}{(c)_k (1)_k}.
$$
Clearly, $F(a,b;c;z)$ is analytic in $\ID$ and in particular, $F(a,1;1;z)=(1-z)^{-a}$.
Here $a,b,c$ are complex numbers such that $c\neq -m$, $m=0,1,2, \ldots$, and
$(a)_k$ is the shifted factorial defined by Appel's symbol
$$(a)_k:=a(a+1)\cdots (a+k-1)=\frac{\Gamma (a+k)}{\Gamma (a)}, \quad k\in\IN  ,
$$
and $(a)_0=1$ for $a\neq 0$. In the exceptional case $c=-m$, $m=0,1,2, \ldots$,
$F(a,b;c;z)$ is defined if $a=-j$ or $b=-j$, where $j=0,1,2, \ldots$ and $j\leq m$. It is clear that if $a=-m$, a negative integer, then
$F(a,b;c;z)$ becomes a polynomial of degree $m$ in $z$.

\bthm\label{KKP16-th2}
Let $f(z)=\sum_{k=0}^\infty a_k z^k$ belong to $\mathcal B$ and $p\in (0,2]$. Assume that  $a,b,c >-1$ such that all $\gamma_k$ have the same sign for $k\geq 0$.
Then
$$
|a_0|^p +\sum_{k=1}^\infty |\gamma_k|\, |a_k| r^k \leq 1 ~\mbox{ for all }~ r \leq R,
$$
where $R$ is the minimal positive root of the equation
$ |F(a,b;c;x)-1|=p/2,
$
and the number $R$ cannot be improved.
\ethm \bpf
We apply Theorem \ref{KKP16-th1}. Set $\varphi_k(r)=\gamma_k r^k$ and remark that $\gamma_0=1$. Let us also note that all $\gamma_k$ have the same sign.
Therefore, we have
$$|F(a,b;c;r)-1|=\sum_{k=1}^\infty \varphi_k(r).
$$
Now the statement of Theorem \ref{KKP16-th1} concludes the proof.
\epf

Sometimes the Bohr radius can be found explicitly. For instance, let us set $b=c=1$. In this case, we have $F(z)=(1-z)^{-a}$ and hence,
$$|F(a,b;c;r)-1|= (1-r)^{-a}-1 =\frac{p}{2}, ~\mbox{ i.e.,}~ R=1-\left(\frac{2}{2+p}\right)^{1/a}
$$
which in the cases $a=1$ and $p=1$ coincide with the classical Bohr radius. Note  that the case $a=2$ is dealt also in
Example \ref{KKP16-eg1}(2).

\section{ $\alpha$-Ces\'aro operators}

For  any $\alpha\in\IC$ with ${\rm Re}\, \alpha>-1$, we consider
$$
\frac{1}{(1-z)^{\alpha+1}}=\sum_{k=0}^{\infty}A_k^{\alpha}z^k,  \quad A_k^{\alpha}=\frac{(\alpha+1)_n}{(1)_n}.
$$
Next, by comparing the coefficient of $z^n$ on both sides of the identity
$$\frac{1}{(1-z)^{\alpha+1}}\cdot\frac{1}{1-z } =\frac{1}{(1-z)^{\alpha+2}},
$$
it follows that
\be\label{KKP16-eq4}
A_n^{\alpha+1}=\sum_{k=0}^{n}A_k^{\alpha}, ~\mbox{ i.e., } \frac{1}{A_n^{\alpha+1}}\sum_{k=0}^{n}A_{n-k}^{\alpha}=1.
\ee
With this principle,
the Ces\'aro operator of order $\alpha$ or $\alpha$-Ces\'aro operator (see Stempak \cite{stem94}) on the space of
analytic functions $f$ in the unit disk $\ID$ is therefore defined by
\be\label{KKP16-eq5}
{\mathcal C}^\alpha f(z)=\sum_{n=0}^{\infty}\left(\frac{1}{A_n^{\alpha+1}}
\sum_{k=0}^{n}A_{n-k}^{\alpha}a_k\right)z^n,
\ee
where $f(z)=\sum_{k=0}^{\infty}a_kz^k$. In terms of convolution, we can write this as
$${\mathcal C}^\alpha f(z)=\frac{f(z)}{(1-z)^{\alpha+1}} *F(1,1;\alpha +2;z)
$$
and thus, we have (cf. \cite{stem94,AHLNP2005})  the following integral form
$${\mathcal C}^\alpha f(z)=(\alpha+1)\int_{0}^{1}f(tz)\frac{(1-t)^\alpha}{(1-tz)^{\alpha+1}}\, dt,
$$
where ${\rm Re}\, \alpha>-1$. This for $\alpha = 0$ gives
$${\mathcal C}^{0}f(z)= \sum_{n=0}^{\infty} \left (\frac{1}{n+1} \sum_{k=0}^{n} a_k \right )z^n,
$$
which is simply the classical Ces\'aro operator  considered by Hardy-Littlewood in 1932 \cite{HaLit32}. Several authors have
studied the boundedness property of these operators on different function spaces (see, for example, \cite{AlBonRic17}). In \cite{KKP2020}, the present authors
established a theorem giving an analog of Bohr theorem for the classical Ces\'aro operator ${\mathcal C}^{0}f$.
It also considered the asymptotical behaviour of the Bohr sum in this case.

For $f\in\mathcal B$, we define a counterpart of Bohr sum for $\alpha >-1$ as
$${\mathcal C}^{\alpha}_f(r) := \sum\limits_{n=0}^{\infty}\left(\frac1{A_n^{\alpha+1}} \sum\limits_{k=0}^{n}A_{n-k}^{\alpha} |a_k| \right) r^n.
$$
Before writing a counterpart of Bohr theorem we need the following estimate.

\bthm
For $f\in\mathcal B$ and $\alpha >-1$, we have
$$\left|{\mathcal C}^{\alpha} (f)(z)\right|\leq (\alpha+1)\Phi(r,1,\alpha+1) = \frac{\alpha+1}{r^{\alpha+1}} \int_0^r\frac{t^{\alpha}}{1-t}\,dt,
$$
where $\Phi(z,s,a) =\sum\limits_{n=0}^{\infty} z^n (n+a)^{-s}$ is the Lerch transcendent function.
\ethm
\bpf
We may represent ${\mathcal C}^{\alpha} (f)$ as
$${\mathcal C}^{\alpha} (f)(z)  =(\alpha+1) \int\limits_0^1\frac1{(t-1)z+1} f\left(\frac{tz}{(t-1)z+1}\right) (1-t)^{\alpha}\, dt
$$
and thus,
\beqq
\left|{\mathcal C}^{\alpha} (f)(z)\right| &\leq& (\alpha+1) \int\limits_0^1\frac{(1-t)^{\alpha}}{|(t-1)z+1|} \left|f\left(\frac{tz}{(t-1)z+1}\right)\right|  dt \\
&\leq& 	(\alpha+1) \int\limits_0^1\frac{(1-t)^{\alpha}}{(t-1)r+1} \,dt = {\mathcal C}_{f_0}^{\alpha}(r), \quad f_0(z)=1.
\eeqq
This integral is not easy to calculate and therefore, we may return to the standard series representation and obtain
$${\mathcal C}_{f_0}^{\alpha}(r)= \sum\limits_{n=0}^{\infty}\frac{A_n^{\alpha}}{A_n^{\alpha+1}}r^n  = (\alpha+1)\sum\limits_{n=0}^{\infty}\frac{r^n}{\alpha+n+1},
$$
and the proof is complete.
\epf

Now we are ready to prove the counterpart of Bohr theorem for the $\alpha$-Ces\'aro operator.

\bthm \label{ThD} Let $f(z)=\sum_{k=0}^\infty a_k z^k$ belong to $\mathcal B$ and $\alpha >-1$. Then
\begin{equation} \label{Eq6}
{\mathcal C}^{\alpha}_f(r) \leq (\alpha+1)\sum_{n=0}^\infty \frac{r^n}{n+\alpha+1} = \frac{\alpha+1}{r^{\alpha+1}} \int_0^r\frac{t^{\alpha}}{1-t}\,dt ~\mbox{ for all }~ r \leq R,
\end{equation}
where $R=R(\alpha )$ is the minimal positive root of the equation
\begin{equation*} 
3 (1+\alpha)\sum_{n=0}^\infty \frac{x^n}{n+\alpha+1}=\frac{2}{1-x}, ~\mbox{ i.e., }~ \sum_{n=0}^\infty \frac{\alpha+1-2n}{n+\alpha+1}x^n =0.
\end{equation*}
The number $R$ cannot be replaced by a larger constant. Note that $R(0) = 0.5335... $.
\ethm \bpf
We apply Theorem \ref{KKP16-th1} with $p=1$.  If we write
\be\label{KKP16-eq6}
{\mathcal C}^{\alpha} (f)(z) =\sum_{n=0}^{\infty}a_n\phi_n(z),
\ee
then collecting the terms involving only $a_n$ in the right hand side of \eqref{KKP16-eq5} we find that
\be\label{KKP16-eq7}
\phi_n(z)= \sum_{k=n}^{\infty}\frac{A_{k-n}^{\alpha}}{A_k^{\alpha+1}}z^k,
\ee
so that for the $\alpha$-Ces\'aro operators ${\mathcal C}^{\alpha} (f)$ we have
$$\varphi_0(x)= \sum_{k=0}^\infty  \frac{A_{k}^{\alpha}}{A_k^{\alpha+1}} x^k
=(\alpha+1)\sum_{k=0}^\infty \frac{x^k}{k+\alpha+1}, \quad x\in [0,1),
$$
by the definition of $A_{k}^{\alpha}$. Moreover, by setting $f(z)=1/(1-z)$ in \eqref{KKP16-eq6}, it is not
difficult to find from \eqref{KKP16-eq4} and \eqref{KKP16-eq5} that
$$\sum_{n=0}^\infty \varphi_n(r) = C^\alpha\left(\frac{1}{1-r}\right)=\frac{1}{1-r}
$$
and thus, Eqn. \eqref{KKP16-eq1} for $p=1$ takes the form
$$3\phi_0(r)> 2 \sum_{n=0}^\infty \varphi_n(r), ~\mbox{ i.e., }~
3 (1+\alpha)\sum_{n=0}^\infty \frac{r^n}{n+\alpha+1}>\frac{2}{1-r}.
$$
The desired inequality (\ref{Eq6}) follows from Theorem \ref{KKP16-th1}
and the sharpness part also follows.
\epf

In what follows ${}_{p}F_q$ represent the generalized hypergeometric function defined by
$$ {}_{p}F_q(a_1, \ldots ,a_p;c_1, \ldots ,c_q;z) =
\sum_{n=0}^{\infty} \frac{(a_1)_n\cdots (a_p)_n}
{(c_1)_n\cdots (c_q)_n}\frac{z^n}{n!}.
$$
We remark that in the interesting case where $p=q+1$, the series converges for
$|z|<1$. If ${\rm Re} \left ( \sum _{j=1}^{q}c_j- \sum _{j=1}^{q+1}a_j \right ) >0$,
then ${}_{q+1}F_q$ converges also at the point $z=1$.

\begin{thm}\label{KKP16-th4}
For $f\in\mathcal B$ and $\alpha >-1$, the inequality
${\mathcal C}^{\alpha}_f(r)\leq S_{\alpha}(r)
$
holds for all $r\in [0,1)$, where $S_{\alpha}(r)$ is equal to
\begin{equation*}
\left\{
\begin{aligned}
\frac{\alpha+1}{1-r^2}\sqrt{\frac{\Gamma''(1+\alpha)}{\Gamma(1+\alpha)} - \left(\frac{\Gamma'(1+\alpha)}{\Gamma(1+\alpha)}\right)^2-r^2\Phi(r^2,2,1+\alpha)} ~\mbox{ for $\alpha\geq0$}\\
\frac{1}{1-r^2} \sqrt{{}_3F_2\left(1,1,1;2+\alpha,2+\alpha;1\right)-r^2{}_3F_2\left(1,1,1;2+\alpha,2+\alpha;r^2\right)}~\mbox{ for $\ds -\frac{1}{2}<\alpha<0$}\\
\frac{1}{1-r}\sqrt{{}_3F_2\left(1,1,1;1.5,1.5;r^2\right)}~\mbox{ for $\ds \alpha=-\frac{1}{2}$}\\
\frac{\Gamma(\alpha+2)}{\Gamma(-\alpha)}	\sqrt{\Gamma(-1-2\alpha)\sum\limits_{n=0}^{\infty} \frac{(n+1)^{1-2\alpha}}{(n+\alpha+1)^2}r^{2n}}~\mbox{ for $\ds -1<\alpha<-\frac{1}{2}$} .
\end{aligned}
\right.
\end{equation*}
Here
$\Phi(z,s,a)$ is the Lerch transcendent function.
\end{thm}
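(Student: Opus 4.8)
The plan is to reduce everything to the representation ${\mathcal C}^{\alpha}_f(r)=\sum_{k=0}^{\infty}|a_k|\,\varphi_k(r)$ already isolated in the proof of Theorem~\ref{ThD}, where $\varphi_k(r)=\sum_{n\ge k}(A^{\alpha}_{n-k}/A^{\alpha+1}_{n})r^n$, and to feed it into a single Cauchy--Schwarz step governed by the right coefficient inequality for $\B$. The useful bound here is not the Schwarz--Pick estimate $|a_k|\le 1-|a_0|^2$ exploited for Theorem~\ref{KKP16-th1}, but the $H^2$ bound $\sum_{k\ge0}|a_k|^2\le \|f\|_\infty^2\le 1$. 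First I would record the integral representation $\varphi_k(r)=(\alpha+1)r^k\int_0^1 t^k(1-t)^{\alpha}(1-tr)^{-\alpha-1}\,dt$, obtained by substituting $f(tz)=\sum_k a_k t^k z^k$ into the integral form of ${\mathcal C}^{\alpha}$; this shows each $\varphi_k$ is nonnegative on $[0,1)$ with leading behaviour $\varphi_k(r)=(A^{\alpha+1}_k)^{-1}r^k\big(1+O(r)\big)$, and it is the tool that will let me sum the relevant series.

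The core of the argument is then
\[
{\mathcal C}^{\alpha}_f(r)=\sum_{k\ge0}|a_k|\,\varphi_k(r)\le\Big(\sum_{k\ge0}|a_k|^2\Big)^{1/2}\Big(\sum_{k\ge0}\varphi_k(r)^2\Big)^{1/2}\le\Big(\sum_{k\ge0}\varphi_k(r)^2\Big)^{1/2},
\]
so the whole problem becomes the production of a closed form for a convenient majorant of $\Sigma_\alpha(r):=\sum_{k\ge0}\varphi_k(r)^2$. To get something summable I would replace $\varphi_k$ by a geometric/Beta-type majorant read off from the monotonicity of the ratios $A^{\alpha}_{n-k}/A^{\alpha+1}_{n}$ in $n$: these are non-increasing exactly when $\alpha\le 0$ (giving $\varphi_k(r)\le (A^{\alpha+1}_k)^{-1}r^k/(1-r)$), whereas for $\alpha\ge0$ the opposite diagonal comparison $A^{\alpha}_{n-k}\le A^{\alpha}_{n}$ is the available one. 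This dichotomy is precisely what forces the break at $\alpha=0$. Summing the squares converts $\Sigma_\alpha(r)$ into a series built out of $(A^{\alpha+1}_k)^{-2}$, for which the key identity is
\[
\sum_{n\ge0}\frac{x^n}{(A^{\alpha+1}_n)^2}={}_3F_2\big(1,1,1;2+\alpha,2+\alpha;x\big),
\]
since $A^{\alpha+1}_n=(\alpha+2)_n/n!$.

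The case analysis is then dictated by the single asymptotic $A^{\alpha+1}_k\sim k^{\alpha+1}/\Gamma(\alpha+2)$, whence $\sum_k (A^{\alpha+1}_k)^{-2}\asymp\sum_k k^{-2(\alpha+1)}$, with threshold $2(\alpha+1)=1$, i.e. $\alpha=-\tfrac12$; this is where I expect the real work to lie. For $\alpha>-\tfrac12$ the endpoint series converges and the factor $(1-r^2)^{-1}$ can be extracted to give the shape $F(1)-r^2F(r^2)$: for $\alpha\ge 0$ I would write the constant through second derivatives of $\Gamma$ (the trigamma function) and the variable part through the Lerch transcendent $\Phi(r^2,2,1+\alpha)$, and for $-\tfrac12<\alpha<0$ repackage the same data as $\,{}_3F_2(1,1,1;2+\alpha,2+\alpha;x)$ evaluated at $x=1$ and $x=r^2$, the value at $x=1$ being finite precisely because $\alpha>-\tfrac12$. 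At the critical $\alpha=-\tfrac12$ the endpoint series diverges logarithmically, so no constant survives and one is left with $(1-r)^{-1}\,{}_3F_2(1,1,1;\tfrac32,\tfrac32;r^2)^{1/2}$; for $-1<\alpha<-\tfrac12$ the divergence is of power type, and I would replace the exact coefficients by their Gamma-asymptotics, which is what produces the prefactor $\Gamma(\alpha+2)/\Gamma(-\alpha)$ and the factor $\Gamma(-1-2\alpha)$ (finite exactly for $\alpha<-\tfrac12$) multiplying $\sum_n (n+1)^{1-2\alpha}(n+\alpha+1)^{-2}r^{2n}$.

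The main obstacle is therefore not the Cauchy--Schwarz reduction, which is immediate, but the uniform special-function bookkeeping across the four regimes: selecting in each range the majorant of $\varphi_k$ whose squared sum is genuinely closed-form, justifying the interchange of summation with the limit $r\to1^{-}$ (and its controlled failure at and below $\alpha=-\tfrac12$), and matching the resulting trigamma, Lerch, ${}_3F_2$, and Gamma expressions. I would carry the comparison $(A^{\alpha+1}_k)^{-2}\asymp k^{-2(\alpha+1)}$ throughout and treat $\alpha=-\tfrac12$ as a separate limiting computation rather than a specialization of either neighbouring formula.
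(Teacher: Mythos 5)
Your opening reduction is exactly the paper's: write ${\mathcal C}^{\alpha}_f(r)=\sum_{n\ge0}|a_n|\phi_n(r)$ with $\phi_n$ as in \eqref{KKP16-eq7}, apply Cauchy--Schwarz together with $\sum_{n\ge0}|a_n|^2\le1$, and reduce to bounding $\sum_{n\ge0}\phi_n^2(r)$. The case split at $\alpha=0$ (monotonicity of $A_k^\alpha$ in $k$ changes direction) and at $\alpha=-1/2$ (convergence threshold of $\sum_k (A_k^{\alpha+1})^{-2}$) is also the paper's. But your mechanism for actually evaluating $\sum_n\phi_n^2(r)$ has a genuine gap in three of the four regimes. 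For $\alpha\ge0$ and $-\tfrac12<\alpha<0$ you propose a pointwise majorant of each $\phi_n$ (e.g.\ $\phi_n(r)\le (A_n^{\alpha+1})^{-1}r^n/(1-r)$ for $\alpha\le 0$) and then sum the squares. That yields bounds of the shape
\[
\frac{1}{1-r}\sqrt{{}_3F_2\left(1,1,1;2+\alpha,2+\alpha;r^2\right)}\quad\text{or}\quad \frac{\alpha+1}{1-r}\sqrt{\Phi(r^2,2,1+\alpha)},
\]
not the stated $\frac{1}{1-r^2}\sqrt{F(1)-r^2F(r^2)}$. These are not the same: as $r\to1^-$ the stated bound is $O\bigl(\sqrt{\log\frac{1}{1-r}/(1-r)}\bigr)$ while yours is of the strictly larger order $(1-r)^{-1}$, so the stated theorem does not follow from your majorant, and there is no way to "extract the factor $(1-r^2)^{-1}$" from it. What the paper does instead is apply Cauchy--Schwarz a \emph{second} time, inside each $\phi_n$: writing $\phi_n(r)=\sum_{k\ge n}c_{n,k}r^k$, it splits $\phi_n^2(r)\le\bigl(\sum_{k\ge n}c_{n,k}^2\bigr)\bigl(\sum_{k\ge n}r^{2k}\bigr)=\frac{r^{2n}}{1-r^2}\sum_{k\ge n}c_{n,k}^2$, bounds $c_{n,k}$ by a quantity depending on $k$ only (via the monotonicity you identified), and then interchanges $\sum_n\sum_{k\ge n}=\sum_k\sum_{n\le k}$; the inner geometric sum $\sum_{n\le k}r^{2n}=\frac{1-r^{2(k+1)}}{1-r^2}$ is precisely what produces the prefactor $(1-r^2)^{-2}$ and the difference structure $F(1)-r^2F(r^2)$ (see \eqref{KKP16-eq12}). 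Your plan is missing this device, which is the crux of the computation; only your $\alpha=-\tfrac12$ case, where the paper too uses the crude majorant-times-geometric-series bound, coincides with the actual proof.

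The regime $-1<\alpha<-\tfrac12$ has a second, independent gap: "replacing the exact coefficients by their Gamma-asymptotics" is not a legitimate step when proving an inequality valid for all $r\in[0,1)$ --- asymptotic equivalence carries no one-sided constant. The paper's argument is again a weighted Cauchy--Schwarz inside $\phi_n$, after factoring
\[
\frac{A_{k-n}^{\alpha}}{A_k^{\alpha+1}}=\frac{(\alpha+1)_{k-n}}{(1)_{k-n}}\cdot\frac{(1)_k}{(\alpha+2)_k},
\]
so that one factor sums exactly by Gauss's theorem, ${}_2F_1(\alpha+1,\alpha+1;1;1)=\Gamma(-1-2\alpha)/\Gamma^2(-\alpha)$ (this, not an asymptotic substitution, is where $\Gamma(-1-2\alpha)$ and $\Gamma(-\alpha)$ come from), while the other is controlled by the genuine inequality $(1)_n/(s)_n\le\Gamma(s)/(n+1)^{s-1}$ for $0\le s\le1$, which produces $\Gamma(\alpha+2)$. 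To repair your proposal you would need to import both of these steps; the skeleton you set up is compatible with them, but as written the closed forms you are required to prove are out of reach of the bounds you construct.
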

\begin{proof}
First of all, we represent ${\mathcal C}^{\alpha}_f(r)$ as
$$\mathcal{C}_f^{\alpha}(r)=\sum_{n=0}^{\infty}\frac1{A_n^{\alpha+1}}\left(\sum_{k=0}^n A_{n-k}^{\alpha}|a_k|\right)r^n
=\sum_{n=0}^{\infty}|a_n|\phi_n(r),
$$
where $\phi_n(r)$ is defined by \eqref{KKP16-eq7}. 	Using the triangle inequality and the fact that
$\sum_{n=0}^{\infty}|a_n|^2\leq 1$ (for $f\in\mathcal B$), we can estimate
$$\mathcal{C}_f^{\alpha}(r) \leq \sqrt{\sum_{n=0}^{\infty}|a_n|^2}\cdot\sqrt{\sum_{n=0}^{\infty}\phi_n^2(r)} \leq\sqrt{\sum_{n=0}^{\infty}\phi_n^2(r)}.
$$
	
For $\alpha>-1/2$, we use the triangle inequality one more time and obtain
$$\phi_n^2(r)= \left(\sum_{k=n}^{\infty}\frac{A_{k-n}^{\alpha}}{A_k^{\alpha+1}} r^k\right)^2\leq \sum_{k=n}^{\infty}\left(\frac{A_{k-n}^{\alpha}}{A_k^{\alpha+1}}\right)^2\cdot \sum_{k=n}^{\infty} r^{2k}~
=\frac{r^{2n}}{1-r^2}\sum_{k=n}^{\infty}\left(\frac{A_{k-n}^{\alpha}}{A_k^{\alpha+1}}\right)^2.
$$
To estimate the second term on the right, we first observe
that
\be\label{KKP16-eq8}
A_k^{\alpha}=\left (\frac{k+1}{\alpha+k+1}\right )A_{k+1}^{\alpha} ~\mbox{ for all $\alpha >-1$ and $k\geq 0$}.
\ee
First we see that $A_k^{\alpha}\leq A_{k+1}^{\alpha}$ for all $\alpha \geq 0$ and $k\geq 0$. As a consequence,
one can see that for $\alpha>0$ and for $k\geq n$,
$$\frac{A_{k-n}^{\alpha}}{A_k^{\alpha+1}}\leq \frac{A_{k}^{\alpha}}{A_k^{\alpha+1}}=\frac{1+\alpha}{1+k+\alpha}
$$
which gives
$$\phi_n^2(r) \leq  \frac{r^{2n}}{1-r^2} \sum_{k=n}^{\infty}\frac{(\alpha+1)^2}{(1+k+\alpha)^2}
$$
so that
\beq\label{KKP16-eq12}
\sum_{n=0}^{\infty}\phi_n^2(r)
&\leq &\frac{(\alpha+1)^2}{1-r^2}\sum_{n=0}^{\infty} r^{2n}\sum_{k=n}^{\infty} \frac{1}{(1+k+\alpha)^2} \nonumber\\
&=& \frac{(\alpha+1)^2}{1-r^2} \sum_{k=0}^{\infty}\frac{1}{(1+k+\alpha)^2}\sum_{n=0}^kr^{2n}  \nonumber\\
& =&\frac{(\alpha+1)^2}{(1-r^2)^2} \sum_{k=0}^{\infty}\frac{1-r^{2(k+1)}}{(1+k+\alpha)^2}  \nonumber\\
&=&  \frac{(\alpha+1)^2}{(1-r^2)^2}\left(\frac{\Gamma''(1+\alpha)}{\Gamma(1+\alpha)} - \left(\frac{\Gamma'(1+\alpha)}{\Gamma(1+\alpha)}\right)^2-r^2\Phi(r^2,2,1+\alpha)\right).
\eeq

Secondly, by \eqref{KKP16-eq8}, we find that $A_k^{\alpha}\geq A_{k+1}^{\alpha}$ for all $-1<\alpha <0$ and $k\geq 0$, and thus,
\be\label{KKP16-eq13}
\frac{A_{k-n}^{\alpha}}{A_k^{\alpha+1}}\leq \frac{A_{0}^{\alpha}}{A_k^{\alpha+1}}= \frac{1}{A_k^{\alpha+1}}=
\frac{(1)_k}{(\alpha+2)_k}
\ee
which gives
$$\phi_n^2(r) \leq  \frac{r^{2n}}{1-r^2} \sum_{k=n}^{\infty} \left ( \frac{(1)_k}{(\alpha+2)_k}\right )^2.
$$
Hence, as before, we can easily deduce that
\beqq
\sum_{n=0}^{\infty}\phi_n^2(r) &\leq &
\frac{1}{(1-r^2)^2} \sum_{k=0}^{\infty}\left ( \frac{(1)_k}{(\alpha+2)_k}\right )^2 (1-r^{2(k+1)}) \\
&=&  \frac{1}{(1-r^2)^2}\left({}_3F_2\left(1,1,1;2+\alpha,2+\alpha;1\right)-r^2{}_3F_2\left(1,1,1;2+\alpha,2+\alpha;r^2\right)\right)
\eeqq
and the last expression converges for $-0.5<\alpha<0$.

Thirdly, for $\alpha=-0.5$ we use the inequality \eqref{KKP16-eq13} and obtain
$$\frac{A_{k-n}^{-1/2}}{A_k^{1/2}}\leq \frac{A_{0}^{-1/2}}{A_n^{1/2}}= \frac{1}{A_n^{1/2}}=  \frac{(1)_n}{(3/2)_n} =\frac{\sqrt{\pi}\Gamma(n+1)}{2\Gamma(n+3/2)}.
$$

In this case
\beqq
\sum\limits_{n=0}^{\infty}\phi_n^2(r) &\leq & \sum\limits_{n=0}^{\infty}\frac{\pi\Gamma^2(n+1)}{4\Gamma^2(n+1.5)}\left(\sum\limits_{k=n}^{\infty}r^k\right)^2
= \frac{\pi}{4(1-r)^2}\sum\limits_{n=0}^{\infty}\frac{\Gamma^2(n+1)}{\Gamma^2(n+1.5)}r^{2n} \\
&= & \frac{1}{(1-r)^2}{}_3F_2\left(1,1,1;1.5,1.5;r^2\right).
\eeqq
	
Finally, for $\alpha<-0.5$, we write the quotient as
$$\frac{A_{k-n}^{\alpha}}{A_k^{\alpha+1}}=  \frac{(\alpha+1)_{k-n}}{(1)_{k-n}} \cdot \frac{(1)_k}{(\alpha+2)_k}
$$
and estimate $\phi_n^2(r)$ using the triangle inequality in the following way:
\beqq
\phi_n^2(r) & =&  \left ( \sum_{k=n}^{\infty}\frac{(\alpha+1)_{k-n}}{(1)_{k-n}} \cdot \frac{(1)_k}{(\alpha+2)_k} r^k\right )^2\\
& \leq &   \sum_{k=n}^{\infty}  \left ( \frac{(\alpha+1)_{k-n}}{(1)_{k-n}}\right )^2  \sum_{k=n}^{\infty}  \left ( \frac{(1)_k}{(\alpha+2)_k} \right )^2r^{2k}\\
& = &    {}_2F_1\left(\alpha +1,\alpha +1;1;1\right) \sum_{k=n}^{\infty}  \left ( \frac{(1)_k}{(\alpha+2)_k} \right )^2r^{2k},
\eeqq
where the first sum in the second step converges to
$${}_2F_1\left(\alpha +1,\alpha +1;1;1\right)= \frac{\Gamma(-1-2\alpha)}{\Gamma^2(-\alpha)}
~\mbox{ for $-1<\alpha <-0.5$.}
$$
Here we have used the well-known formula
$$ {}_2F_1(a,b;c;1) = \frac{\Gamma (c)\Gamma (c-a-b)}{\Gamma (c-a)\Gamma (c-b)}< \infty ~\mbox{ for $c>a+b$}.
$$
To estimate the other series, we use the well-known inequality
$$\frac{(1)_n}{(s)_n}\leq \frac{\Gamma (s)}{(n+1)^{s-1}},
$$
which holds for any natural $n$ and $0\leq s\leq 1$. Therefore,
$$\frac{(1)_k}{(\alpha+2)_k}= \left ( \frac{\alpha +1}{\alpha+k+1)}\right ) \frac{(1)_k}{(\alpha+1)_k} \leq  \frac{\Gamma (\alpha +2)}{\alpha+k+1)}  \frac{1}{(k+1)^{\alpha}}.
$$
It follows that
\beqq
\sum_{n=0}^{\infty}\phi_n^2(r)&\leq& {}_2F_1\left(\alpha +1,\alpha +1;1;1\right) \Gamma^2(\alpha+2)\sum\limits_{n=0}^{\infty}\sum_{k=n}^{\infty} \frac{(k+1)^{-2\alpha}}{(k+\alpha+1)^2}r^{2k} \\
& \leq &\frac{\Gamma(-1-2\alpha)\Gamma^2(\alpha+2)}{\Gamma^2(-\alpha)}\sum\limits_{n=0}^{\infty} \frac{(n+1)^{1-2\alpha}}{(n+\alpha+1)^2}r^{2n}.
\eeqq

This finishes the proof.
\end{proof}

\section{Asymptotic Bohr radius for $\mathcal{C}_f^{1}$}

Let us study the order of the estimate for $\alpha=1$. We recall the following equality:
$$
{\rm Li\,}_2(x)+{\rm Li\,}_2(1-x) = \frac{\pi^2}6-\log x\log(1-x).
$$
Then the estimate looks like
$$\frac{1}{1-r^2} \cdot 2\sqrt{\frac{\pi^2}6-\frac{{\rm Li\,}_2(r^2)}{r^2}} = \frac{\ds 2\sqrt{\frac{\pi^2}6-\frac{1}{r^2}\left(\frac{\pi^2}6-2\log r\log(1-r^2)-{\rm Li\,}_2(1-r^2)\right)}}{1-r^2},
$$
where ${\rm Li\,}_2(z) = \sum_{k=1}^{\infty}(1/k^2)z^k$ is a polylogarithm function.
Moreover,
$${\rm Li\,}_2(1-r^2)\to0 ~\mbox{ and }~\frac{\log (1/r)}{1-r}\to1 ~\mbox{ as $r\to1$,}
$$
and so we obtain
$$
\mathcal{C}_f^{1}(r)\leq \frac{2\sqrt{2\log r\log(1-r^2)}+o(1)}{1-r^2} \sim \sqrt{2}\cdot\sqrt{\frac{\log\frac1{1-r^2}}{1-r}} .
$$

We shall prove

\bthm \label{KKP2-th5}
There exists an $f\in {\mathcal B}$ such that
$$
\mathcal{C}_f^{1}(r)\sim \frac{4\sqrt{2q}}{(3+q)\sqrt{1-r}}\approx \frac{1.47217\dots}{\sqrt{1-r}},
$$
where $q\approx 7.57736\dots$ is the root of  the equation $3q = (3+q)\log(1+q).$
\ethm
\bpf
Consider the function $\phi_n(r)$ defined by \eqref{KKP16-eq7} with $\alpha =1$ and calculate it using definition of
$A_k^{\alpha}$. This gives that
$$\phi_n(r) = \sum_{k=n}^{\infty}\frac{A_{k-n}^{1}}{A_k^{2}}r^k = 2 \sum_{k=n}^{\infty} \frac{k-n+1}{(k+2)(k+1)}r^k.
$$
As
$$\frac{r^{k+2}}{(k+2)(k+1)} =\int_0^r\int_0^{\rho}s^k\,ds\,d\rho,
$$
it follows that
\be\label{KKP16-eq10}
\phi_n(r) =  \frac2{r^2} \int\limits_0^r \int\limits_0^{\rho} s^n \sum_{k=n}^{\infty} (k-n+1) s^{k-n}\,ds\,d\rho  =
\frac2{r^2}\int\limits_0^r\int\limits_0^{\rho}\frac{s^n}{(1-s)^2}\,ds\,d\rho .
\ee

In \cite{BombBour-2004}, Bombieri and Bourgain showed how one can build functions $h(z) = \sum_{k=0}^{\infty}h_kz^k$ and
$f(z)= \sum_{k=0}^{\infty}a_kz^k$ in the family $\mathcal{B}$ such that
\bee
\item[{\rm (i)}] $|h_k|=t^k\sqrt{1-t^2}$ \quad {\rm (ii)}
$\ds \|f-h\|_2 :=  \sqrt{\sum_{k=0}^{\infty}|h_k-a_k|^2} < \sqrt{1-t^2}\sqrt{\log\frac 1{1-t}}$,
\eee
where $0\leq t\leq1$ is some number.

Accordingly, using their idea, we obtain the following estimate
\beq\label{KKP16-eq11}
\mathcal{C}_f^{1}(r) & =&  \sum_{k=0}^{\infty}|h_k+a_k-h_k|\phi_k(r)
\geq
\sum_{k=0}^{\infty}|h_k|\phi_k(r) -\sum_{k=0}^{\infty}|h_k-a_k|\phi_k(r)\nonumber\\\
&\geq & \sqrt{1-t^2}\sum_{k=0}^{\infty}t^k\phi_k(r) - \sqrt{1-t^2}\sqrt{\log\frac 1{1-t}}\sqrt{\sum_{k=0}^{\infty}\phi_k^2(r)}.
\eeq
Using \eqref{KKP16-eq10} and the above consideration, the first term in \eqref{KKP16-eq11} can be calculated directly. Note that
$$\sum_{k=0}^{\infty}t^k\phi_k(r) = \frac{2}{r^2}\sum_{k=0}^{\infty} \int\limits_0^r\int\limits_0^{\rho}\frac{t^k s^k}{(1-s)^2}\,ds\,d\rho
 =\frac{2}{r^2}\int\limits_0^r\int\limits_0^{\rho}\frac{ds\,d\rho}{(1-s)^2(1-ts)}
$$
and so to computet the integral on the right, we write
$$\frac{1}{(1-s)^2(1-ts)} =\frac{1}{1-t}\cdot \frac{1}{(1-s)^2}   -\frac{t}{(1-t)^2}\cdot \frac{1}{1-s} +\frac{t^2}{(1-t)^2}\cdot\frac{1}{1-ts},
$$
which gives by integration
$$ \int\limits_0^{\rho}\frac{ds}{(1-s)^2(1-ts)} = \frac{1}{1-t}\cdot \frac{\rho }{1-\rho}   +\frac{t}{(1-t)^2}\left [\log (1-\rho) -\log (1-t\rho)\right ]
$$
and hence we can easily obtain by integrating it again
$$\int\limits_0^r\int\limits_0^{\rho}\frac{ds\,d\rho}{(1-s)^2(1-ts)} = \frac{2[-r(1-t)+(1-rt)(\log(1-rt)-\log(1-r))]}{r^2(1-t)^2}.
$$

The second term in \eqref{KKP16-eq11} can be estimated using the result from the first part of Theorem \ref{KKP16-th4}, i.e., Eqn. \eqref{KKP16-eq12}.
For $\alpha=1$, we have
$$\sqrt{\sum_{k=0}^{\infty}\phi_k^2(r)} \leq \frac{2}{1-r^2}\sqrt{\frac{\Gamma''(2)}{\Gamma(2)} - \left(\frac{\Gamma'(2)}{\Gamma(2)}\right)^2-r^2\Phi(r^2,2,2)}
= \frac{2}{1-r^2} \sqrt{\frac{\pi^2}6-\frac{{\rm Li\,}_2(r^2)}{r^2}}.
$$

Therefore, the above discussion shows that  $\mathcal{C}_f^{1}(r)$ is not less than
\begin{multline*}
\frac1{\sqrt{1-r}}\left(\frac{2\sqrt{1-t^2}\sqrt{1-r}\,[-r(1-t)+(1-rt)(\log(1-rt)-\log(1-r))]}{r^2(1-t)^2} - \right.\\- \left.
2\sqrt{1-t^2}\sqrt{\log\frac 1{1-t}}\cdot \frac{1 }{\sqrt{1-r}(1+r)} \sqrt{\frac{\pi^2}6-\frac{{\rm Li\,}_2(r^2)}{r^2}}\right).
\end{multline*}

Let $t=r^q$ for some $q$. Since
$$
\lim\limits_{r\to 1}\sqrt{1-r^{2q}}\sqrt{\log\frac 1{1-r^q}}\cdot \frac{1}{\sqrt{1-r}} \sqrt{\frac{\pi^2}6-\frac{{\rm Li\,}_2(r^2)}{r^2}} = 0,
$$
we have to consider only the first term
$$
\frac{1}{\sqrt{1-r}}\cdot \frac{2\sqrt{1+r}\,[-r+(1+r)\log(1+r)]}{r^2} \sim \frac{2\sqrt{2}(\log 4-1)}{\sqrt{1-r}} \approx \frac{1.09261\dots}{\sqrt{1-r}}.
$$
Next, we calculate the limit
\begin{multline*}
\lim\limits_{r\to1}\frac{2\sqrt{1-r^{2q}}\sqrt{1-r}\left [-r(1-r^q)+(1-r^{q+1})(\log(1-r^{q+1})-\log(1-r))\right ]}{r^2(1-r^q)^2}   \\
= \frac{-q+(q+1)\log(q+1)}{\left(q/2\right)^{\frac32}}.
\end{multline*}
The point of maximum is the root of the equation
$3q = (3+q)\log(1+q).
$
Calculation shows that $q\approx 7.57736\dots$. Therefore,
$$
\mathcal{C}_f^{1}(r)\sim \frac{4\sqrt{2q}}{(3+q)\sqrt{1-r}}\approx \frac{1.47217\dots}{\sqrt{1-r}}.
$$
\epf

\subsection*{Acknowledgments}
The work of    I. Kayumov and D. Khammatova is supported by the Russian
Science Foundation under grant 18-11-00115. 

%

\end{document}